\newtheorem{theorem}{Theorem}[section]
\newtheorem{corollary}{Corollary}[section]
\newtheorem{example}{Example}[section]
\numberwithin{equation}{section}
\begin{document}
\title[On the Zeros of $R$-Bonacci Polynomials]{On the Zeros of $R$-Bonacci
Polynomials and Their Derivatives}
\author[N. YILMAZ \"{O}ZG\"{U}R ]{N\.{I}HAL YILMAZ \"{O}ZG\"{U}R}
\address{Bal\i kesir University\\
Department of Mathematics\\
10145 Bal\i kesir, TURKEY}
\email{nihal@balikesir.edu.tr}
\author[\"{O}. \"{O}ZTUN\c{C} ]{\"{O}ZNUR \"{O}ZTUN\c{C}}
\address{Bal\i kesir University\\
10145 Bal\i kesir, TURKEY}
\email{oztunc@balikesir.edu.tr}
\date{}
\subjclass[2010]{11B39, 12E10, 30C15}
\keywords{$R$-Bonacci polynomial, symmetric polynomial, complex polynomial,
complex zeros}

\begin{abstract}
The purpose of the present paper is to examine the zeros of $R$-Bonacci
polynomials and their derivatives. We confirm a conjecture about the zeros
of $R$-Bonacci polynomials for some special cases. We also find explicit
formulas of the roots of derivatives of $R$-Bonacci polynomials in some
special cases.
\end{abstract}

\maketitle

\let\thefootnote\relax\footnote{%
Both authors are supported by the Scientific Research Projects Unit of Bal\i
kesir University under the project number Mat.BAP.2013.0001.}

\section{Introduction}

\label{sec:1}

The problem finding a convenient method to determine the zeros of a
polynomial has a long history that dates back to the work of Cauchy \cite%
{Marden}. In this paper our aim is to examine the zeros of $R$-Bonacci
polynomials and their derivatives. $R$-Bonacci polynomials $R_{n}(x)$ are
defined by the following recursive equation in \cite{Hogg} for any integer $n
$ and $r\geq 2:$
\begin{equation}
R_{n+r}(x)=x^{r-1}R_{n+r-1}\left( x\right) +x^{r-2}R_{n+r-2}\left( x\right)
+...+R_{n}\left( x\right) \text{,}  \label{definition}
\end{equation}%
with the initial values$\ R_{-k}(x)=0,\ k=0,1,...,r-2,\ R_{1}(x)=1,\
R_{2}\left( x\right) =x^{r-1}$. For $r=2$ we obtain the classical Fibonacci
polynomials. There are a great number of publications regarding to Fibonacci
polynomials and their generalizations, (see \cite{Ricci}-\cite{Hogg}, \cite{Koshy} and \cite{oztunc}). In\textit{\ }\cite{Hogg2},\textit{\ }V. E.\textit{\ }Hoggat
and M. Bicknell found the zeros of these polynomials using hyperbolic
trigonometric functions. For $r=3$ we obtain Tribonacci polynomials. Since
the open expressions are not found for the zeros of Tribonacci polynomials
and their derivatives, numerical studies have been studied in recent years.
Zero attractors of these polynomials are obtained by W. Goh, M. X. He and P.
E. Ricci in \cite{zero}. In \cite{Cof}, the number of the real roots of
Tribonacci-coefficient polynomials are found.

The symmetric polynomials of the zeros of\textit{\ }Fibonacci and Tribonacci
are found by M. X. He, D. Simon and P. E. Ricci in \cite{Ricci}.
Furthermore, in \cite{num}, it was determined the location and distribution
of the zeros of the Fibonacci and Tribonacci polynomials. They found out
interesting geometric properties of these polynomials. They proved the
following equation%
\begin{equation}
R_{n}(xw^{k})=w^{kn}R_{n}(x),w=e^{\frac{2\pi i}{r}},k=0,1,...,r-1,n=1,2,...
\label{eqn1}
\end{equation}%
and deduce that the zeros of $R$-Bonacci polynomials lie on the equally
spaced $r$-stars with respect to the argument $\frac{2\pi }{r}$. Also they
conjectured that these $r$-stars have $r-1$ branches starting at the zeros
of the equation $x^{r}+1=0$.

In Section \ref{sec:2} we confirm this conjecture for some classes of $R$%
-Bonacci polynomials. To do this we find the symmetric polynomials which are
made up of the $r^{th}\ $order of the zeros of $R$-Bonacci polynomials.
Using these symmetric polynomials, we determine the reference roots for the
polynomials $R_{rn+p}(x)$ for $p=0,1$ and $n=1$.

There have been several papers on the derivatives of the Fibonacci
polynomials (see \cite{Fil}, \cite{Fil2} and \cite{Wang}). In Section \ref%
{sec:3} we study the roots of the derivatives of $R$-Bonacci polynomials. We
obtain the most general symmetric polynomials which are made up of the $%
r^{th}\ $order of the zeros of derivatives of $R$-Bonacci polynomials. Using
these symmetric polynomials, we find some formulas for the zeros of
derivatives of $R$-Bonacci polynomials for some values of $t$. These
formulas are substantially simple and useful.

\section{\textbf{Zeros of Some Classes of }$R$\textbf{-Bonacci Polynomials}}

\label{sec:2}

It was given the general representations for $R$-Bonacci polynomials as \cite%
{Hogg}%
\begin{equation}
R_{n}(x)=\underset{j=0}{\overset{\left[ \frac{(r-1)(n-1)}{r}\right] }{\sum }}%
\binom{n-j-1}{j}_{r}x^{(r-1)(n-1)-rj}.  \label{algebraic presentation}
\end{equation}%
Here $\binom{n}{j}_{r}$denotes the $r$-nomial coefficient. In this section,
we obtain the symmetric polynomials including the zeros of $R$-Bonacci
polynomials. Our results are coincide with the ones obtained in \cite{Ricci}
for $R=2,3$.

For the definition of a symmetric polynomial see \cite{Ricci}. Let $\left\{
x_{1},...,x_{r}\right\} $ be the set of the reference zeros of the
polynomial $R_{rn}\left( x\right) $.

\begin{theorem}
\label{theorem1} The most general form of the $j^{th}$symmetric polynomials
consisting of over the $r^{th}$ of zeros of $R_{rn}\left( x\right) \ $is as
follows$:$%
\begin{equation}
{\small \sigma }_{j}\left( x_{1}^{r},...,x_{(r-1)n-1}^{r}\right) {\small =}%
(-1)^{j}\binom{rn-j-1}{j}_{r}{\small .}  \label{eq20}
\end{equation}
\end{theorem}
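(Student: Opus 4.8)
The plan is to connect the elementary symmetric polynomials of the $r$-th powers of the zeros directly to the $r$-nomial coefficients appearing in the explicit representation~\eqref{algebraic presentation}. The starting observation is that by~\eqref{eqn1}, the full set of zeros of $R_{rn}(x)$ is organized into $r$-stars: if $x_k$ is a zero then so is $x_k w^m$ for $w=e^{2\pi i/r}$ and $m=0,\dots,r-1$. Hence the monomials $x^{r}$ are the natural ``reduced'' variables, and the claim is that the symmetric functions $\sigma_j$ in the variables $x_1^{r},\dots,x_{(r-1)n-1}^{r}$ (the $r$-th powers of the reference roots) read off the $r$-nomial coefficients of a reduced polynomial whose roots are exactly the $x_k^{r}$.

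First I would rewrite $R_{rn}(x)$ using~\eqref{algebraic presentation} and exhibit it as a polynomial in the single variable $y=x^{r}$ (up to an overall power of $x$). Setting $n\mapsto rn$ in~\eqref{algebraic presentation}, the general exponent is $(r-1)(rn-1)-rj$; I would factor out the lowest power of $x$ and verify that the remaining exponents are all multiples of $r$, so that $R_{rn}(x)=x^{s}\,P(x^{r})$ for an explicit polynomial
\begin{equation}
P(y)=\sum_{j=0}^{(r-1)n-1}\binom{rn-j-1}{j}_{r}\,y^{(r-1)n-1-j}.
\label{reducedpoly}
\end{equation}
The nonzero reference roots $x_1,\dots,x_{(r-1)n-1}$ then have $r$-th powers $x_1^{r},\dots,x_{(r-1)n-1}^{r}$ equal precisely to the roots of $P(y)$, and the degree count $(r-1)n-1$ matches the number of reference roots.

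Next I would invoke Vieta's formulas for $P(y)$. Writing $P(y)=c_0\,y^{N}+c_1\,y^{N-1}+\cdots$ with $N=(r-1)n-1$, the leading coefficient is $c_0=\binom{rn-1}{0}_{r}=1$ and the coefficient of $y^{N-j}$ is $c_j=\binom{rn-j-1}{j}_{r}$. The standard relation between elementary symmetric functions of the roots and the coefficients gives
\begin{equation}
\sigma_j\bigl(x_1^{r},\dots,x_{(r-1)n-1}^{r}\bigr)=(-1)^{j}\,\frac{c_j}{c_0}=(-1)^{j}\binom{rn-j-1}{j}_{r},
\end{equation}
which is exactly~\eqref{eq20}. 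The routine verification that the constant term and normalization behave correctly I would leave implicit.

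The main obstacle I anticipate is purely the bookkeeping in the first step: one must confirm rigorously that substituting $n\mapsto rn$ forces every surviving exponent $(r-1)(rn-1)-rj$ to be congruent modulo $r$ to a fixed residue, so that the polynomial genuinely collapses to a polynomial in $y=x^{r}$ with no cross terms. This amounts to checking $(r-1)(rn-1)\equiv -(r-1)\equiv 1 \pmod r$ and that decreasing $j$ by one shifts the exponent by exactly $r$; both are direct but need to be stated cleanly to justify that the $r$-th powers of the reference roots are genuinely the roots of a single-variable polynomial and that no reference root coincides or is spurious. Once that reduction is secure, the identification of $\sigma_j$ with the $r$-nomial coefficients is immediate from Vieta.
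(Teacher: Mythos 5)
Your argument is correct and is essentially the paper's own proof in different notation: the paper uses the $r$-star structure from (\ref{eqn1}) to write $R_{rn}(x)=x\prod_{k}(x-x_k)(x-x_k e^{2\pi i/r})\cdots(x-x_k e^{-2\pi i/r})=x\prod_k\left(x^r-x_k^r\right)$, expands, and compares coefficients with $(\ref{algebraic presentation})$, which is precisely your reduction to $P(y)$ with $y=x^r$ followed by Vieta. There is no substantive difference between the two routes.
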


\begin{proof}
By (\ref{eqn1}), the zeros of $R$-Bonacci polynomials lie in the argument $%
\frac{2\pi }{r}$ and hence the polynomial ${\small R}_{rn}\left( x\right) $
can be factorized as
\begin{equation*}
{\small R}_{rn}\left( x\right) {\small =x}\underset{k=1}{\overset{(r-1)n-1}{%
\prod }}\left( x-x_{k}\right) \left( x-x_{k}e^{\frac{2\pi i}{r}}\right)
...\left( x-x_{k}e^{-\frac{2\pi i}{r}}\right) .
\end{equation*}%
If we rearrange this equation we obtain
\begin{eqnarray}
R_{rn}\left( x\right) &=&x\{x^{r^{2}n-rn-r}-  \notag \\
&&x^{r^{2}n-rn-2r}\underset{k=1}{\overset{(r-1)n-1}{\sum }}%
x_{k}^{r}+x^{r^{2}n-rn-3r}\underset{j\neq k}{\overset{}{\sum }}%
x_{j}^{r}x_{k}^{r}  \notag \\
&&-x^{r^{2}n-rn-4r}\underset{j\neq k\neq l}{\overset{}{\sum }}%
x_{j}^{r}x_{k}^{r}x_{l}^{r}+...-\underset{k=1}{\overset{(r-1)n-1}{\prod }}%
x_{k}^{r}\}  \notag \\
&=&\left\{ \underset{j=0}{\overset{(r-1)n-1}{\sum }}%
(-1)^{j}x^{(r-1)(rn-1)-rj}\left\{ \underset{1=l_{1}<l_{2}<...<l_{j}}{\overset%
{}{\sum }}\underset{i=1}{\overset{j}{\prod }}x_{l_{i}}^{r}\right\} \right\}
\notag \\
&=&\underset{j=0}{\overset{(r-1)n-1}{\sum }}(-1)^{j}\sigma _{j}\left(
x_{1}^{r},x_{2}^{r},...,x_{(r-1)n-1}^{r}\right) x^{(r-1)(rn-1)-rj}.
\label{eqn25}
\end{eqnarray}

On the other hand by (\ref{algebraic presentation}) we can write
\begin{equation}
R_{rn}(x)=\underset{j=0}{\overset{(r-1)n-1}{\sum }}\binom{rn-j-1}{j}%
_{r}x^{(r-1)(rn-1)-rj}.  \label{eqn26}
\end{equation}%
Since the equations $($\ref{eqn25}$)$ and $($\ref{eqn26}$)$ are equal, we
obtain the desired result $\left( \text{\ref{eq20}}\right) $.
\end{proof}

\begin{corollary}
\label{cor2}The following equations are satisfied by the zeros of $%
R_{rn}\left( x\right) :$
\begin{equation}
\underset{k=1}{\overset{(r-1)n-1}{\sum }}x_{k}^{r}=-\binom{rn-2}{1}_{r}%
{\tiny .}  \label{eq27}
\end{equation}

\begin{proof}
By setting $j=1$ in the equation $($\ref{eq20}$)$ desired result is obtained.
\end{proof}
\end{corollary}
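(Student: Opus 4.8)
The plan is to specialize the general symmetric-function identity supplied by Theorem~\ref{theorem1} to the single index $j=1$, since the left-hand side of the asserted identity (\ref{eq27}) is exactly the first elementary symmetric polynomial evaluated on the $r$th powers of the reference zeros. The key observation is that the first elementary symmetric polynomial in any collection of variables is, by definition, the sum of those variables; applied to the arguments $x_1^r,\ldots,x_{(r-1)n-1}^r$ this reads
\[
\sigma_1\!\left(x_1^r,\ldots,x_{(r-1)n-1}^r\right)=\sum_{k=1}^{(r-1)n-1}x_k^r .
\]
Thus the quantity on the left of (\ref{eq27}) is precisely $\sigma_1$ of the $r$th powers of the zeros of $R_{rn}(x)$, and the problem reduces to reading off the $j=1$ case of the formula already proved.

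Next I would evaluate the right-hand side of (\ref{eq20}) at $j=1$. Substituting $j=1$ into $(-1)^j\binom{rn-j-1}{j}_r$ produces $(-1)^1\binom{rn-2}{1}_r=-\binom{rn-2}{1}_r$, which is exactly the value asserted in (\ref{eq27}). Equating the two evaluations of $\sigma_1$ then gives the claimed identity at once, with no further computation required.

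Because this is a pure specialization of an already-established formula, there is essentially no obstacle to overcome. The only point that must be checked is the standard identification of $\sigma_1$ with the sum of its arguments, and once that is in place the binomial simplification $\binom{rn-1-1}{1}_r=\binom{rn-2}{1}_r$ is automatic. The corollary therefore follows directly from Theorem~\ref{theorem1}.
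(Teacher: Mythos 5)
Your proposal is correct and follows exactly the paper's own one-line argument: specialize the formula $\sigma_j\left(x_1^r,\ldots,x_{(r-1)n-1}^r\right)=(-1)^j\binom{rn-j-1}{j}_r$ of Theorem~\ref{theorem1} at $j=1$ and identify $\sigma_1$ with the sum of its arguments. Nothing further is needed.
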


\begin{theorem}
\label{thm2}The most general form of the $j^{th}$ symmetric polynomials
consisting of the $r^{th}$ zeros of $R_{rn+1}\left( x\right) \ $is as
follows $:$%
\begin{equation}
{\small \sigma }_{j}\left( x_{1}^{r},...,x_{(r-1)n}^{r}\right) {\small =}%
(-1)^{j}\binom{rn-j}{j}_{r}{\small .}  \label{eq76}
\end{equation}
\end{theorem}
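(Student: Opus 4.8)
The plan is to mirror the proof of Theorem~\ref{theorem1} exactly, replacing the index $rn$ with $rn+1$ throughout. The entire argument rests on comparing two expressions for the same polynomial $R_{rn+1}(x)$: one coming from its factorization over the reference zeros, and one coming from the algebraic presentation \eqref{algebraic presentation}. Because \eqref{eqn1} guarantees that the zeros of any $R$-Bonacci polynomial arrange themselves into $r$-stars, each reference zero $x_k$ is accompanied by its $r$ rotated copies $x_k e^{2\pi i m/r}$, so the polynomial factors as a product over these complete orbits. First I would establish how many reference zeros there are. Counting degrees is the key bookkeeping step: the degree of $R_{rn+1}(x)$ from \eqref{algebraic presentation} is $(r-1)(rn+1-1)=(r-1)rn$, which must equal the number of zeros; after pulling off the common factor $x$ (if present) and dividing by $r$ for each orbit, the number of reference zeros should come out to $(r-1)n$, matching the subscript in \eqref{eq76}.

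Next I would write the factorization
\begin{equation*}
R_{rn+1}(x)=x^{\varepsilon}\prod_{k=1}^{(r-1)n}\left(x-x_k\right)\left(x-x_k e^{\frac{2\pi i}{r}}\right)\cdots\left(x-x_k e^{-\frac{2\pi i}{r}}\right),
\end{equation*}
where $\varepsilon\in\{0,1\}$ is fixed by examining the constant and lowest-order terms of $R_{rn+1}(x)$ (for the $p=1$ case I expect $\varepsilon=0$, in contrast to the $p=0$ case where the factor $x$ appears). The product of each orbit $\prod_{m}(x-x_k e^{2\pi i m/r})=x^r-x_k^r$ collapses the $r$ linear factors into a single factor depending only on $x_k^r$. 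Expanding the resulting product $\prod_{k}(x^r-x_k^r)$ in powers of $x^r$ produces precisely the elementary symmetric functions $\sigma_j\left(x_1^r,\ldots,x_{(r-1)n}^r\right)$ as coefficients, with the alternating signs $(-1)^j$, exactly as in \eqref{eqn25}.

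The final step is to read off the coefficients. On one side the coefficient of $x^{(r-1)(rn+1-1)-rj}$ is $(-1)^j\sigma_j\left(x_1^r,\ldots,x_{(r-1)n}^r\right)$; on the other side, from \eqref{algebraic presentation} specialized to $n\mapsto rn+1$, the same coefficient is the $r$-nomial coefficient $\binom{(rn+1)-j-1}{j}_r=\binom{rn-j}{j}_r$. Equating the two yields \eqref{eq76}. I expect the main obstacle to be purely careful accounting rather than any genuine difficulty: one must verify that the exponents in the two expansions align term-by-term (in particular that the spacing of $r$ between successive powers of $x$ is consistent with both the orbit structure and the $r$-nomial presentation), and one must correctly determine the boundary value $\varepsilon$ and the upper summation limit $(r-1)n$ so that the degree of the lowest-order term and the highest index $j$ agree on both sides. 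These are the same checks that underlie Theorem~\ref{theorem1}, so the argument should transfer with only the arithmetic shift from $rn$ to $rn+1$.
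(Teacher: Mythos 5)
Your proposal follows the paper's proof of Theorem \ref{thm2} essentially verbatim: the same factorization of $R_{rn+1}(x)$ into $(r-1)n$ complete rotation orbits (with no extra factor of $x$, since $\deg R_{rn+1}=(r-1)rn$ is divisible by $r$), the same expansion into elementary symmetric functions of the $x_k^r$, and the same comparison with \eqref{algebraic presentation} at index $rn+1$. The degree bookkeeping and the identification $\binom{(rn+1)-j-1}{j}_r=\binom{rn-j}{j}_r$ are exactly as in the paper, so the argument is correct and not materially different.
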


\begin{proof}
By a similar way used in the proof of Theorem \ref{theorem1} we can write
\begin{equation*}
{\small R}_{rn+1}\left( x\right) {\small =}\underset{k=1}{\overset{(r-1)n}{%
\prod }}\left( x-x_{k}\right) \left( x-x_{k}e^{\frac{2\pi i}{r}}\right)
...\left( x-x_{k}e^{-\frac{2\pi i}{r}}\right) .
\end{equation*}%
Then we get
\begin{eqnarray}
R_{rn+1}\left( x\right) &=&\{x^{r^{2}n-rn}-  \notag \\
&&x^{r^{2}n-rn-r}\underset{k=1}{\overset{(r-1)n}{\sum }}%
x_{k}^{r}+x^{r^{2}n-rn-2r}\underset{j\neq k}{\overset{}{\sum }}%
x_{j}^{r}x_{k}^{r}  \notag \\
&&-x^{r^{2}n-rn-3r}\underset{j\neq k\neq l}{\overset{}{\sum }}%
x_{j}^{r}x_{k}^{r}x_{l}^{r}+...-\underset{k=1}{\overset{(r-1)n}{\prod }}%
x_{k}^{r}\}  \notag \\
&=&\left\{ \underset{j=0}{\overset{(r-1)n}{\sum }}(-1)^{j}x^{rn(r-1)-rj}%
\left\{ \underset{1=l_{1}<l_{2}<...<l_{j}}{\overset{}{\sum }}\underset{i=1}{%
\overset{j}{\prod }}x_{l_{i}}^{r}\right\} \right\}  \notag \\
&=&\underset{j=0}{\overset{(r-1)n}{\sum }}(-1)^{j}\ \sigma _{j}\left(
x_{1}^{r},x_{2}^{r},...,x_{(r-1)n}^{r}\right) x^{rn(r-1)-rj}.  \label{eq75}
\end{eqnarray}%
By putting $rn+1$ instead of $n$ in (\ref{algebraic presentation}) we find%
\begin{equation}
R_{rn+1}\left( x\right) =\underset{j=0}{\overset{n(r-1)}{\sum }}\binom{rn-j}{%
j}_{r}x^{(r-1)rn-rj}.  \label{eq77}
\end{equation}

It follows from the comparison $($\ref{eq75}$)$ and $($\ref{eq77}$)$ \ it is
possible to write the desired result $\left( \text{\ref{eq76}}\right) $.
\end{proof}

\begin{corollary}
\label{cor3} The following equations are satisfied by the zeros of $%
R_{rn+1}\left( x\right) :$
\begin{equation}
\underset{k=1}{\overset{(r-1)n}{\sum }}x_{k}^{r}=-\binom{rn-1}{1}_{r}{\tiny .%
}  \label{eq78}
\end{equation}

\begin{proof}
If we set $j=1$ in the equation $($\ref{eq76}$)$ then we get the equation $($%
\ref{eq78}$)$.
\end{proof}
\end{corollary}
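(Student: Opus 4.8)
The plan is to obtain (\ref{eq78}) as an immediate specialization of the general identity (\ref{eq76}) established in Theorem \ref{thm2}. The only observation needed is that the first elementary symmetric polynomial of any finite collection of quantities is simply their sum; in particular,
\[
\sigma_1\left(x_1^r,\ldots,x_{(r-1)n}^r\right)=\sum_{k=1}^{(r-1)n}x_k^r .
\]
Consequently the case $j=1$ of (\ref{eq76}) already encodes the full content of the corollary, and no re-derivation of the factorization of $R_{rn+1}(x)$ or re-use of the symmetry relation (\ref{eqn1}) is required.

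Concretely, I would set $j=1$ in (\ref{eq76}). The left-hand side then becomes the sum displayed above, while the right-hand side becomes $(-1)^1\binom{rn-1}{1}_r=-\binom{rn-1}{1}_r$; equating the two gives (\ref{eq78}) at once. There is essentially no obstacle here. The single point meriting a moment's care is purely bookkeeping: one must check that the index of the $r$-nomial coefficient specializes correctly, that is, that $\binom{rn-j}{j}_r$ collapses to $\binom{rn-1}{1}_r$ upon substituting $j=1$, and that the summation range over the $(r-1)n$ reference zeros is exactly the one carried by $\sigma_1$ in Theorem \ref{thm2}. Both are immediate, so the corollary follows directly from the theorem.
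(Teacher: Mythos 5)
Your proposal is correct and follows exactly the same route as the paper: the corollary is obtained by specializing $j=1$ in equation (\ref{eq76}) and identifying $\sigma_1$ with the sum of the $r$-th powers of the reference zeros. No further comment is needed.
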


Now using these symmetric polynomials we obtain the reference roots of $%
R_{rn+p}(x)$ for $p=0,1$.

\begin{theorem}
\label{thm8} For $p=0,1$ and $n=1,$ let $x_{j}(1\leq j\leq r)$ be the
reference zeros of $R_{rn+p}\left( x\right) .$ Then we have
\begin{equation}
x_{j}^{r}=-1.  \label{eq38}
\end{equation}
\end{theorem}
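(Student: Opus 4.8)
The plan is to treat the two cases $p=0$ and $p=1$ (always with $n=1$) in parallel, passing in each case from the reference zeros $x_k$ to their $r$-th powers $y_k:=x_k^r$ and then reconstructing the monic polynomial satisfied by the $y_k$. First I would record that the $y_k$ are exactly the roots of
$P(y):=\prod_{k=1}^{d}\left(y-x_k^{r}\right)=\sum_{j=0}^{d}(-1)^{j}\sigma_{j}\!\left(x_1^{r},\dots,x_d^{r}\right)y^{d-j}$,
where $d$ is the number of reference zeros, namely $d=(r-1)\cdot 1-1=r-2$ when $p=0$ and $d=(r-1)\cdot 1=r-1$ when $p=1$. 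Substituting the symmetric-function values from Theorem \ref{theorem1} (resp. Theorem \ref{thm2}) specialized to $n=1$, that is $\sigma_{j}=(-1)^{j}\binom{r-1-j}{j}_{r}$ (resp. $(-1)^{j}\binom{r-j}{j}_{r}$), the sign factors cancel and $P(y)$ becomes $\sum_{j}\binom{r-1-j}{j}_{r}y^{d-j}$ (resp. $\sum_{j}\binom{r-j}{j}_{r}y^{d-j}$), a polynomial whose coefficients are $r$-nomial coefficients.

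The heart of the argument is then to collapse these $r$-nomial coefficients to ordinary binomial coefficients. The key observation is that $\binom{m}{j}_{r}$, being the coefficient of $x^{j}$ in $\left(1+x+\cdots+x^{r-1}\right)^{m}$, counts the compositions $j=a_1+\cdots+a_m$ with $0\le a_i\le r-1$; once $j\le r-1$ the upper bound is vacuous, so this count equals the number of unrestricted compositions, giving $\binom{m}{j}_{r}=\binom{m+j-1}{j}$. Applying this with $m=r-1-j$ (resp. $m=r-j$) — which is legitimate since the index satisfies $j\le r-2<r-1$ (resp. $j\le r-1$) — reduces each coefficient to $\binom{r-2}{j}$ (resp. $\binom{r-1}{j}$). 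By the binomial theorem this yields $P(y)=(y+1)^{r-2}$ for $p=0$ and $P(y)=(y+1)^{r-1}$ for $p=1$. Since $P(y)=(y+1)^{d}$ has $y=-1$ as its only root, every $y_k=x_k^{r}$ equals $-1$, which is precisely \eqref{eq38}.

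I expect the main obstacle to be the bookkeeping step that identifies $\binom{r-1-j}{j}_{r}$ with $\binom{r-2}{j}$ (and its $p=1$ analogue): one must argue carefully that the truncation of the geometric factor at $x^{r-1}$ is inactive over the index range that actually occurs, since it is exactly this point that makes the coefficients agree with those of $(y+1)^{d}$. Everything else is a direct substitution into the symmetric-polynomial identities already established in Theorems \ref{theorem1} and \ref{thm2}. As a sanity check one can verify the closed forms $R_{r}(x)=x\left(x^{r}+1\right)^{r-2}$ and $R_{r+1}(x)=\left(x^{r}+1\right)^{r-1}$ on small values of $r$; these exhibit the reference zeros on the star $x^{r}+1=0$ directly, and they make the $p=0$ case visibly vacuous for $r=2$.
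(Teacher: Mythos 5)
Your proof is correct, but it takes a genuinely different route from the paper's. The paper uses only the $j=1$ symmetric functions (Corollaries \ref{cor2} and \ref{cor3}), i.e.\ the power sums $\sum_k x_k^r=-(r-2)$ resp.\ $-(r-1)$ for $n=1$, and then concludes by asserting that the reference zeros form a single rotation orbit $x_j, x_je^{2\pi i/r},\dots$, so that every term of the power sum equals $x_j^r$ and one may simply divide by the number of terms; that structural assertion is not really argued there, since a priori distinct reference zeros (one per branch) need not be rotations of one another. You instead use the full list of symmetric functions from Theorems \ref{theorem1} and \ref{thm2} to reconstruct the monic polynomial $P(y)=\prod_k\left(y-x_k^r\right)$, and the combinatorial identity $\binom{m}{j}_r=\binom{m+j-1}{j}$ for $j\le r-1$ --- which you correctly isolate as the crux and justify by noting the cap $a_i\le r-1$ on the parts is vacuous in that range --- collapses its coefficients to $\binom{r-2}{j}$ resp.\ $\binom{r-1}{j}$, yielding $P(y)=(y+1)^{r-2}$ resp.\ $(y+1)^{r-1}$. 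This buys you more than the paper's argument: a proof that \emph{every} zero's $r$-th power equals $-1$ with no assumption about how the reference zeros sit on the branches, together with the closed forms $R_r(x)=x\left(x^r+1\right)^{r-2}$ and $R_{r+1}(x)=\left(x^r+1\right)^{r-1}$, consistent with the paper's example $B_6(x)=(x^5+1)^4$. The price is the extra $r$-nomial bookkeeping, which you handle correctly since the index range occurring in Theorems \ref{theorem1} and \ref{thm2} at $n=1$ never exceeds $r-1$.
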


\begin{proof}
Let $p=0$ or $p=1$ and let the set of the reference zeros of $R_{rn+p}\left(
x\right) $ be $\left\{ x_{1},...,x_{r}\right\} $. The other zeros of the
polynomial $R_{rn+p}\left( x\right) $ will be generated by the argument $%
\frac{2\pi }{r}$ except the root $x=0$. For a fixed $j$, using the equations
$($\ref{eq78}$)$ and $($\ref{eq27}$)$ we have
\begin{equation*}
\underset{k=1}{\overset{r-1}{\sum }}%
x_{k}^{r}=x_{1}^{r}+x_{2}^{r}+...+x_{r-1}^{r}=x_{j}^{r}+\left( x_{j}e^{\frac{%
2\pi i}{r}}\right) ^{r}+\left( x_{j}e^{\frac{4\pi i}{r}}\right)
^{r}+...+\left( x_{j}e^{\frac{2(r-2)\pi i}{r}}\right) ^{r}=-(r-1)
\end{equation*}%
and%
\begin{equation*}
\underset{k=1}{\overset{r-2}{\sum }}%
x_{k}^{r}=x_{1}^{r}+x_{2}^{r}+...+x_{r-1}^{r}=x_{j}^{r}+\left( x_{j}\ e^{%
\frac{2\pi i}{r}}\right) ^{r}+\left( x_{j}\ e^{\frac{4\pi i}{r}}\right)
^{r}+...+\left( x_{j}e^{\frac{2(r-3)\pi i}{r}}\right) ^{r}=-(r-2),
\end{equation*}%
respectively. Rearranging the above equations, it can be easily seen that
the reference roots of $R_{rn+p}\left( x\right) $ as in the equation $($\ref%
{eq38}$).$
\end{proof}

\begin{figure}[t]
\centering
\includegraphics[width=6.5cm]{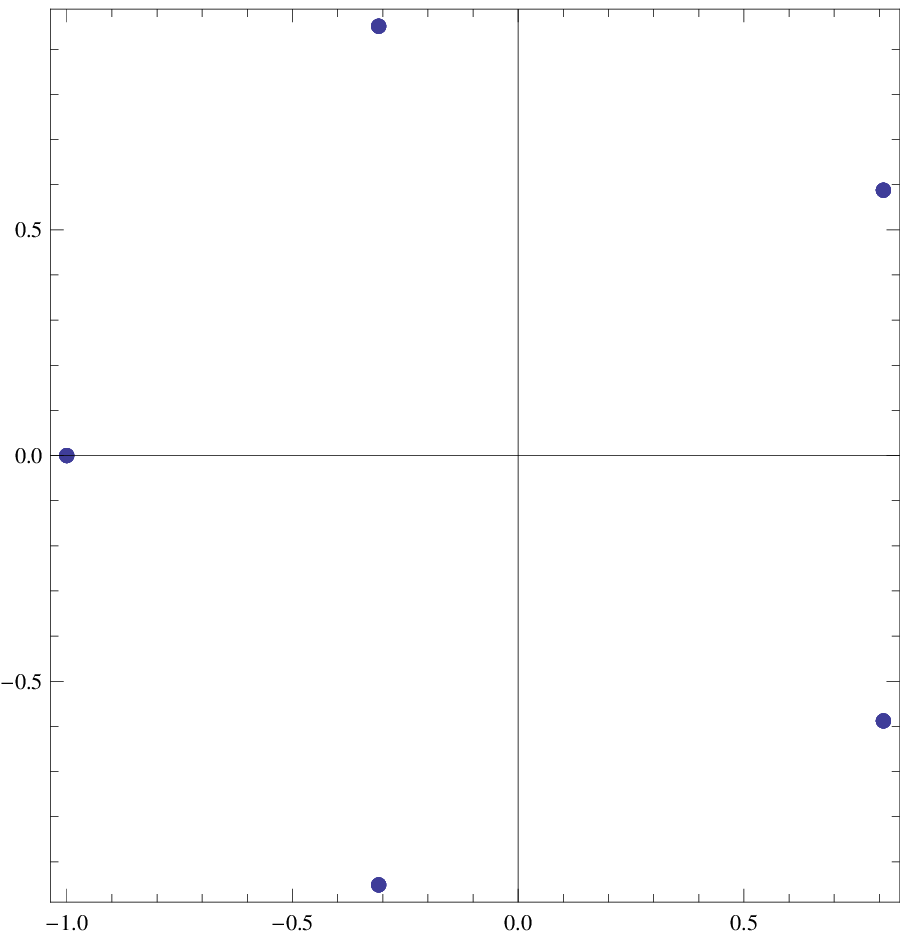}
\caption{The zeros of $B_{6}\left( x\right)$}
\label{fig:1}
\end{figure}

\begin{example}
Let us consider the following $5$-Bonacci polynomial
\begin{equation*}
B_{6}\left( x\right) =(x^{5}+1)^{4}.
\end{equation*}%
Using $\left( \text{\ref{eq38}}\right) $ if we solve the equation $%
x_{j}^{5}=-1(1\leq j\leq 5)$, the reference roots of the polynomial $%
B_{6}\left( x\right) $ are found as follows $($see Figure \ref{fig:1}$):$%
\begin{equation*}
x_{1}=\left( -1\right) ,x_{2}=\left( -1\right) ^{\frac{1}{5}},x_{3}=\left(
-1\right) ^{\frac{2}{5}},x_{4}=\left( -1\right) ^{\frac{3}{5}},x_{5}=\left(
-1\right) ^{\frac{4}{5}}.
\end{equation*}
\end{example}

\section{\textbf{Zeros of Derivatives of }$R$\textbf{-Bonacci Polynomials}}

\label{sec:3}

Before we find the symmetric polynomials which are made up of the $r^{th}$
order of the zeros of the derivatives of $R$-Bonacci polynomials $%
R_{n}^{(t)}(x)$ we write the algebraic representations of them. For any
fixed $n$, using the equation $\left( \text{\ref{algebraic presentation}}%
\right) $, the algebraic representation of the derivative polynomial $%
R_{n}^{(t)}(x)$ is obtained%
\begin{equation}
R_{n}^{(t)}(x)=\underset{j=0}{\overset{}{\sum }}\binom{n-j-1}{j}%
_{r}((r-1)(n-1)-rj)...((r-1)(n-1)-rj-t+1)x^{(r-1)(n-1)-rj-t}.  \label{eq21}
\end{equation}

Now we determine the symmetric polynomials for $R_{rn+p}^{(t)}(x)\ $for
particular values of $t$. We give the following theorem.

\begin{theorem}
\label{thm4} Let $k\in
\mathbb{N}
^{+}$, $p\in \left\{ 0,1,...,r-1\right\} $. If we define
\begin{equation}
t=rk-(1-p)(r-1),  \label{eqn21}
\end{equation}%
\begin{equation}
\mu =((r-1)(rn+p-1))...(rn(r-1)-t+(p-1)r+(2-p))  \label{eqn22}
\end{equation}%
and%
\begin{equation}
\eta =(r-1)n-\left( \frac{t+(1-p)(r-1)}{r}\right) \text{,}  \label{eqn23}
\end{equation}%
then$\ $the most general form of the symmetric polynomials consisting of the
zeros of $R_{rn+p}^{(t)}\left( x\right) \ $is as follows$:$%
\begin{equation}
{\small \sigma }\left( x_{1}^{r},...,x_{\eta }^{r}\right) {\small =}\frac{%
(-1)^{j}((r-1)(rn+p-1)-rj)...((r-1)(rn+p-1)-rj-t+1)}{\mu }\binom{rn+p-j-1}{j}%
_{r}{\small .}  \label{eqn24}
\end{equation}
\end{theorem}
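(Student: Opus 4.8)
The plan is to adapt the factorization-and-comparison method of Theorems \ref{theorem1} and \ref{thm2} to the derivative polynomial. The first step is to verify that the zeros of $R_{rn+p}^{(t)}(x)$ still lie on $r$-stars. I would differentiate the functional equation $(\ref{eqn1})$ exactly $t$ times with respect to $x$: writing $w=e^{2\pi i/r}$ and $N=rn+p$, the relation $R_{N}(xw^{k})=w^{kN}R_{N}(x)$ gives, after $t$ applications of the chain rule, $R_{N}^{(t)}(xw^{k})\,w^{kt}=w^{kN}R_{N}^{(t)}(x)$, that is $R_{N}^{(t)}(xw^{k})=w^{k(N-t)}R_{N}^{(t)}(x)$. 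Hence if $\alpha\neq 0$ is a zero of $R_{rn+p}^{(t)}$ then so are $\alpha w,\dots,\alpha w^{r-1}$, and since $w$ is a primitive $r$-th root of unity each orbit has exactly $r$ distinct points. Thus the nonzero zeros again split into full $r$-stars, exactly as in Section \ref{sec:2}.

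Second, I would extract the leading coefficient and the degree from $(\ref{eq21})$. The top-degree term is the one with $j=0$, so the leading coefficient is the falling factorial $((r-1)(rn+p-1))\cdots((r-1)(rn+p-1)-t+1)$; rewriting its last factor shows it equals $\mu$ as defined in $(\ref{eqn22})$, and $\deg R_{rn+p}^{(t)}=(r-1)(rn+p-1)-t$. The role of the special value $t=rk-(1-p)(r-1)$ in $(\ref{eqn21})$ is exactly to make $(t+(1-p)(r-1))/r=k$ an integer and, as a short computation shows, to force $(r-1)(rn+p-1)-t=r\eta$ with $\eta$ as in $(\ref{eqn23})$. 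In other words this choice of $t$ makes the degree a multiple of $r$ and leaves no zero at the origin, so the whole polynomial is a pure product of $r$-stars:
\begin{equation*}
R_{rn+p}^{(t)}(x)=\mu\prod_{k=1}^{\eta}\bigl(x-x_{k}\bigr)\bigl(x-x_{k}w\bigr)\cdots\bigl(x-x_{k}w^{r-1}\bigr)=\mu\prod_{k=1}^{\eta}\bigl(x^{r}-x_{k}^{r}\bigr),
\end{equation*}
where I have used $\prod_{s=0}^{r-1}(x-x_{k}w^{s})=x^{r}-x_{k}^{r}$ for each star.

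Third, I would expand this product in the elementary symmetric polynomials of the quantities $x_{k}^{r}$, obtaining
\begin{equation*}
R_{rn+p}^{(t)}(x)=\mu\sum_{j=0}^{\eta}(-1)^{j}\sigma_{j}\bigl(x_{1}^{r},\dots ,x_{\eta}^{r}\bigr)\,x^{r(\eta-j)},
\end{equation*}
and then match the coefficient of $x^{(r-1)(rn+p-1)-rj-t}$ with the corresponding coefficient in $(\ref{eq21})$, namely $\binom{rn+p-j-1}{j}_{r}$ times the falling factorial $((r-1)(rn+p-1)-rj)\cdots((r-1)(rn+p-1)-rj-t+1)$. Equating the two expressions and dividing through by $\mu$ gives precisely $(\ref{eqn24})$. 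As a consistency check, the term $j=0$ returns $\sigma_{0}=1$, since the numerator there is exactly $\mu$.

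The main obstacle is bookkeeping rather than any conceptual difficulty. I must confirm that the exponents $r(\eta-j)$ produced by the $j$-th elementary symmetric polynomial coincide with the exponents $(r-1)(rn+p-1)-rj-t$ appearing in $(\ref{eq21})$, which reduces exactly to the identity $(r-1)(rn+p-1)-t=r\eta$ that the special $t$ guarantees; and I must check that the chosen $t$ keeps all the falling-factorial coefficients for $0\le j\le\eta$ nonzero, so that every $\sigma_{j}$ is genuinely determined and the constant term survives. Carrying out the arithmetic that the last factor of the displayed falling factorial equals the stated last factor of $\mu$ in $(\ref{eqn22})$ is the one remaining routine computation.
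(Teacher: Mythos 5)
Your proposal is correct and follows essentially the same route as the paper: factor $R_{rn+p}^{(t)}(x)$ as $\mu$ times a product over $r$-stars, expand in the elementary symmetric polynomials of the $x_{k}^{r}$, and compare coefficients with the representation $(\ref{eq21})$ evaluated at $rn+p$. The one genuine addition is your justification of the $r$-star structure by differentiating $(\ref{eqn1})$ $t$ times to get $R_{N}^{(t)}(xw^{k})=w^{k(N-t)}R_{N}^{(t)}(x)$, a step the paper dismisses with ``it can be easily seen''; your degree and leading-coefficient computations matching $r\eta$ and $\mu$ are also accurate.
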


\begin{proof}
It can be easily seen that%
\begin{equation*}
{\small R}_{rn+p}^{(t)}\left( x\right) {\small =}\mu \underset{k=1}{\overset{%
\eta }{\prod }}\left( x-x_{k}\right) \left( x-x_{k}e^{\frac{2\pi i}{r}%
}\right) ...\left( x-x_{k}e^{-\frac{2\pi i}{r}}\right) ,
\end{equation*}%
where $\mu $ is a coefficient. Then we have
\begin{eqnarray}
R_{rn+p}^{(t)}\left( x\right) &=&\mu \{x^{r^{2}n-rn-(t+(1-p)(r-1))}-  \notag
\\
&&x^{r^{2}n-rn-(t+(1-p)(r-1))-r}\underset{k=1}{\overset{\eta }{\sum }}%
x_{k}^{r}+x^{r^{2}n-rn-(t+(1-p)(r-1))-2r}\underset{j\neq k}{\overset{}{\sum }%
}x_{j}^{r}\ x_{k}^{r}  \notag \\
&&-x^{r^{2}n-rn-(t+(1-p)(r-1))-3r}\underset{j\neq k\neq l}{\overset{}{\sum }}%
x_{j}^{r}x_{k}^{r}x_{l}^{r}+...-\underset{k=1}{\overset{\eta }{\prod }}%
x_{k}^{r}\}  \notag \\
&=&\mu \left\{ \underset{j=0}{\overset{\eta }{\sum }}%
(-1)^{j}x^{r^{2}n-rn-(t+(1-p)(r-1))-rj}\left\{ \underset{%
1=l_{1}<l_{2}<...<l_{j}}{\overset{}{\sum }}\underset{i=1}{\overset{j}{\prod }%
}x_{l_{i}}^{r}\right\} \right\}  \notag \\
&=&\mu \underset{j=0}{\overset{\eta }{\sum }}(-1)^{j}\sigma _{j}\left(
x_{1}^{r},x_{2}^{r},...,x_{\eta }^{r}\right) x^{(r-1)(rn+p-1)-rj-t}.
\label{eq24}
\end{eqnarray}%
By using the equation (\ref{eq21}) and taking $rn+p$ instead of $n$ we can
write%
\begin{equation}
R_{rn+p}^{(t)}(x)=\underset{j=0}{\overset{}{\sum }}\binom{rn+p-j-1}{j}%
_{r}((r-1)(rn+p-1)-rj)...((r-1)(rn+p-1)-rj-t+1)x^{(r-1)(rn+p-1)-rj-t}.
\label{eq25}
\end{equation}

Since the equations $($\ref{eq24}$)$ and $($\ref{eq25}$)$ are equal the
proof follows.
\end{proof}

\begin{corollary}
Let $t$ and $\eta $ be as in the equations $($\ref{eqn21}$)$ and $($\ref%
{eqn23}$)$, respectively. For $k\in
\mathbb{N}
^{+}$ and $p\in \left\{ 0,1,..,r-1\right\} $, the following equations are
satisfied by the zeros of $R_{rn+p}^{(t)}\left( x\right) :$
\begin{equation}
(i)\underset{k=1}{\overset{\eta }{\prod }}x_{k}^{r}=\frac{(-1)^{\eta }t\
(t-1)...(1)}{((r-1)(rn+p-1))...(rn(r-1)-t+(p-1)r+(2-p))}\binom{rn+p-\eta -1}{%
\eta }_{r}{\tiny .}  \label{eq26}
\end{equation}%
and%
\begin{equation}
(ii)\underset{k=1}{\overset{\eta }{\sum }}x_{k}^{r}=-\frac{%
((r-1)(rn+p-1)-r)...((r-1)(rn+p-1)-r-t+1)}{%
((r-1)(rn+p-1))...(rn(r-1)-t+(p-1)r+(2-p))}\binom{rn+p-2}{1}_{r}{\tiny .}
\label{eq28}
\end{equation}

\begin{proof}
In the equation $($\ref{eqn24}$)$, if we put $j=\eta \ $and $j=1$ we obtain
the desired results, respectively.
\end{proof}
\end{corollary}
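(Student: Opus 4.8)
The plan is to obtain both identities as special cases of the master formula (\ref{eqn24}) established in Theorem \ref{thm4}, by selecting the two extreme values $j=1$ and $j=\eta$ of the index. The key observation is that, among the elementary symmetric functions of the $\eta$ quantities $x_1^r,\ldots,x_\eta^r$, the sum $\sum_{k=1}^{\eta}x_k^r$ is exactly $\sigma_1$, while the product $\prod_{k=1}^{\eta}x_k^r$ is exactly $\sigma_\eta$ (the top symmetric function). Hence each of (i) and (ii) is a single evaluation of (\ref{eqn24}), and the work reduces to simplifying the resulting numerator products.

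For part (ii) I would set $j=1$ in (\ref{eqn24}). The sign $(-1)^1=-1$ supplies the leading minus sign, the binomial coefficient becomes $\binom{rn+p-2}{1}_r$, the denominator is $\mu$ as given in (\ref{eqn22}), and the descending numerator product is $((r-1)(rn+p-1)-r)\cdots((r-1)(rn+p-1)-r-t+1)$. This reproduces (\ref{eq28}) term for term, so no further manipulation is required here.

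For part (i) I would set $j=\eta$ in (\ref{eqn24}), which yields sign $(-1)^\eta$, binomial $\binom{rn+p-\eta-1}{\eta}_r$, and denominator $\mu$. The only substantive step — and the place I expect the main obstacle — is showing that the descending numerator product collapses to $t!=t(t-1)\cdots(1)$. This hinges on verifying that its first factor equals $t$, i.e.\ that $(r-1)(rn+p-1)-r\eta=t$. I would prove this directly from the definition (\ref{eqn23}): since $r\eta=r(r-1)n-(t+(1-p)(r-1))$ and $(r-1)(rn+p-1)=r(r-1)n+(r-1)(p-1)$, subtracting gives $(r-1)(p-1)+t+(1-p)(r-1)=t$, because the two $(r-1)$-terms cancel. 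Once the first factor is $t$, the descending product of the $t$ consecutive integers $t,t-1,\ldots,t-t+1$ is precisely $t!$, matching the numerator $t(t-1)\cdots(1)$ in (\ref{eq26}).

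Thus the whole corollary reduces to two substitutions into (\ref{eqn24}); the only point needing care is the arithmetic identity $(r-1)(rn+p-1)-r\eta=t$, which I would record explicitly so the reader can see why the product in (i) telescopes to the factorial $t!$.
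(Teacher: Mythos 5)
Your proof is correct and follows the same route as the paper's: both simply substitute $j=1$ and $j=\eta$ into the master formula (\ref{eqn24}), identifying the sum with $\sigma_1$ and the product with $\sigma_\eta$. The only difference is that you explicitly verify the identity $(r-1)(rn+p-1)-r\eta=t$, which makes the collapse of the numerator product to $t(t-1)\cdots 1$ transparent — a detail the paper's one-line proof leaves implicit.
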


Let
\begin{equation}
\upsilon _{\eta }=\frac{(-1)^{\eta }t\ (t-1)...(1)}{%
((r-1)(rn+p-1))...(rn(r-1)-t+(p-1)r+(2-p))}\binom{rn+p-\eta -1}{\eta }_{r}
\label{eqn104}
\end{equation}%
and
\begin{equation}
\psi _{\eta }=-\frac{((r-1)(rn+p-1)-r)...((r-1)(rn+p-1)-r-t+1)}{%
((r-1)(rn+p-1))...(rn(r-1)-t+(p-1)r+(2-p))}\binom{rn+p-2}{1}_{r}{\tiny .}
\label{eqn105}
\end{equation}%
Then we can give the following theorem.

\begin{theorem}
For $t=r(r-1)n-2r-(1-p)(r-1)$, $R_{rn+p}^{(t)}\left( x\right) $ has $\left[
r((r-1)n-\left( \frac{t+(1-p)(r-1)}{r}\right) )\right] $ roots and these
roots are
\begin{equation}
x_{k}=\left( \frac{\psi _{2}\pm \sqrt{\psi _{2}^{2}-4\upsilon _{2}}}{2}%
\right) ^{\frac{1}{r}}e^{\frac{2k\pi i}{r}},(k=0,1,...,r-1),  \label{eqno1}
\end{equation}%
where $\upsilon _{2}$ and $\psi _{2}$ are defined by the equations $($\ref%
{eqn104}$)$ and $($\ref{eqn105}$)$, respectively.
\end{theorem}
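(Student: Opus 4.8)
The plan is to observe that the prescribed value of $t$ forces $\eta = 2$, which collapses the general symmetric-polynomial machinery of Theorem \ref{thm4} down to a single quadratic. First I would substitute $t = r(r-1)n - 2r - (1-p)(r-1)$ into the definition of $\eta$ given in (\ref{eqn23}). Since $t + (1-p)(r-1) = r(r-1)n - 2r$, dividing through by $r$ gives $\frac{t+(1-p)(r-1)}{r} = (r-1)n - 2$, and hence
\begin{equation*}
\eta = (r-1)n - \left((r-1)n - 2\right) = 2.
\end{equation*}
This already pins down the stated root count: the number of zeros away from the origin is $r\eta = 2r$, which matches $\left[r\left((r-1)n - \frac{t+(1-p)(r-1)}{r}\right)\right] = [2r] = 2r$.

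With $\eta = 2$, Theorem \ref{thm4} tells us that $R_{rn+p}^{(t)}(x)$ factorizes as an $r$-star built from exactly two reference zeros $x_1$ and $x_2$. Next I would invoke the Corollary following that theorem, specializing equations (\ref{eq26}) and (\ref{eq28}) to $\eta = 2$. These read $x_1^{r} x_2^{r} = \upsilon_2$ and $x_1^{r} + x_2^{r} = \psi_2$, where $\upsilon_2$ and $\psi_2$ are the quantities obtained by setting $\eta = 2$ in (\ref{eqn104}) and (\ref{eqn105}), respectively.

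The heart of the argument is then purely algebraic. Writing $u = x_1^{r}$ and $v = x_2^{r}$, the two identities above express $\psi_2$ and $\upsilon_2$ as the elementary symmetric functions of $u$ and $v$, so $u$ and $v$ are precisely the two roots of the quadratic $z^{2} - \psi_2 z + \upsilon_2 = 0$. The quadratic formula yields $u, v = \frac{\psi_2 \pm \sqrt{\psi_2^{2} - 4\upsilon_2}}{2}$, and therefore each reference zero satisfies $x_j^{r} = \frac{\psi_2 \pm \sqrt{\psi_2^{2} - 4\upsilon_2}}{2}$. Finally, appealing to the $r$-star symmetry of (\ref{eqn1}) — each reference zero generates a full spoke upon multiplication by $e^{2k\pi i/r}$ — I would take $r$-th roots and arrive at the closed form (\ref{eqno1}).

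I expect no serious obstacle here, since the clever choice of $t$ does all the structural work by reducing everything to the $\eta = 2$ case. The only points requiring genuine care are the arithmetic verification that $\eta$ equals $2$ exactly (rather than merely after applying the integer-part bracket) and the confirmation that the $\pm$ ambiguity in the quadratic formula corresponds exactly to the two distinct reference zeros $x_1$ and $x_2$, so that both signs are truly realized and neither is a spurious artifact.
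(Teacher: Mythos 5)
Your proposal is correct and follows essentially the same route as the paper: both reduce the problem to the two symmetric functions $x_1^r+x_2^r=\psi_2$ and $x_1^rx_2^r=\upsilon_2$, recognize $x_j^r$ as the roots of the quadratic $z^2-\psi_2 z+\upsilon_2=0$, and then distribute the $r$-th roots over the $r$-star. Your explicit verification that the chosen $t$ forces $\eta=2$ is a small improvement in rigor over the paper, which leaves that computation implicit.
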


\begin{proof}
Since $R_{rn+p}^{(r(r-1)n-2r-(1-p)(r-1))}\left( x\right) $ is a polynomial
of $r((r-1)n-\left( \frac{t+(1-p)(r-1)}{r}\right) )$-th degree then by using
the equations $($\ref{eq26}$)$ and $($\ref{eq28}$)$ we have
\begin{equation}
\underset{k=1}{\overset{2}{\prod }}x_{k}^{r}=x_{1}^{r}x_{2}^{r}=\upsilon _{2}
\label{eqno2}
\end{equation}%
and%
\begin{equation}
\underset{k=1}{\overset{2}{\sum }}x_{k}^{r}=x_{1}^{r}+x_{2}^{r}=\psi _{2}%
\text{.}  \label{eqno4}
\end{equation}

Since we know that $x_{1}^{r}=\frac{\upsilon _{2}}{x_{2}^{r}}$ it can be
easily seen that
\begin{equation*}
x_{2}^{2r}-\psi _{2}\ x_{2}^{r}+\upsilon _{2}=0.
\end{equation*}%
Solving this last equation of the second degree, the roots can be easily
found. So the roots of $R_{rn+p}^{(t)}\left( x\right) \ $must be as the
equation $($\ref{eqno1}$)$.
\end{proof}

Since we have Fibonacci and Tribonacci polynomials for $r=2$ and $r=3$,
respectively, we can give the following corollaries.

\begin{corollary}
Let $p\in \left\{ 0,1\right\} $ and $t=2n-5+p$. The zeros of the polynomial $%
F_{2n+p}^{(t)}\left( x\right) $ can be formulized as follows$:$%
\begin{equation*}
x_{k}=\left( \frac{\psi _{2}\pm \sqrt{\psi _{2}^{2}-4\upsilon _{2}}}{2}%
\right) ^{\frac{1}{2}}e^{k\pi i},(k=0,1)
\end{equation*}%
where $\upsilon _{2}$ and $\psi _{2}$ are defined by the equations $($\ref%
{eqn104}$)$ and $($\ref{eqn105}$)$, respectively.
\end{corollary}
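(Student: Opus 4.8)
The plan is to obtain this corollary as the direct specialization of the preceding theorem to the case $r=2$, using the fact recorded in the introduction that the classical Fibonacci polynomials $F_{n}(x)$ are precisely the $R$-Bonacci polynomials $R_{n}(x)$ when $r=2$. Throughout I would therefore write $F_{2n+p}(x)=R_{2n+p}(x)$ and apply the theorem with $r$ replaced by $2$, carrying over its conclusion that these are \emph{all} the zeros.

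First I would verify that the hypothesis on $t$ in the theorem reduces to the one stated here. Substituting $r=2$ into the admissibility condition $t=r(r-1)n-2r-(1-p)(r-1)$ gives
\begin{equation*}
t=2n-4-(1-p)=2n-5+p,
\end{equation*}
which is exactly the derivative order appearing in the corollary. Hence $t=2n-5+p$ is admissible and the theorem applies verbatim. As a consistency check on the degree, the number of roots $\left[r\bigl((r-1)n-\tfrac{t+(1-p)(r-1)}{r}\bigr)\right]$ becomes, at $r=2$ and $t=2n-5+p$, equal to $[2(n-(n-2))]=4$, matching the two square roots of each of the two solutions $x_{2}^{r}$ of the quadratic below.

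Next I would specialize the root formula $(\ref{eqno1})$. With $r=2$ the $r$th root $(\cdots)^{1/r}$ becomes the square root $(\cdots)^{1/2}$, the rotation factor $e^{2k\pi i/r}$ becomes $e^{k\pi i}$, and the index runs over $k=0,1,\dots,r-1=0,1$. Substituting these into $(\ref{eqno1})$ yields
\begin{equation*}
x_{k}=\left(\frac{\psi_{2}\pm\sqrt{\psi_{2}^{2}-4\upsilon_{2}}}{2}\right)^{\frac{1}{2}}e^{k\pi i},\quad(k=0,1),
\end{equation*}
with $\upsilon_{2}$ and $\psi_{2}$ still given by $(\ref{eqn104})$ and $(\ref{eqn105})$, now evaluated at $r=2$.

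The only points requiring care---and essentially the only content of the argument---are the arithmetic check that $r(r-1)n-2r-(1-p)(r-1)$ collapses to $2n-5+p$ at $r=2$, and confirming that $\upsilon_{2}$ and $\psi_{2}$ retain their defining expressions under this substitution. No genuine obstacle arises, because the heavy lifting (the quadratic reduction $x_{2}^{2r}-\psi_{2}x_{2}^{r}+\upsilon_{2}=0$ together with its solution) was already carried out in the proof of the theorem; the corollary is a purely mechanical instance of it.
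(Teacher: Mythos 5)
Your proposal is correct and matches the paper's intent exactly: the paper offers no separate proof for this corollary, presenting it as the immediate specialization of the preceding theorem to $r=2$ (where $R_{n}(x)=F_{n}(x)$), and your arithmetic check that $t=r(r-1)n-2r-(1-p)(r-1)$ reduces to $2n-5+p$, together with the substitution of $r=2$ into the root formula and the degree count, is precisely the verification the paper leaves implicit.
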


In \cite{Wang}, J. Wang proved the following equation for any fixed $n$
\begin{equation}
L_{n}^{^{(t)}}\left( x\right) =nF_{n}^{(t-1)}\left( x\right) ,n\geq 1\text{,}
\label{eqno3}
\end{equation}%
where $L_{n}\left( x\right) $ are Lucas polynomials. Hence the zeros of $%
L_{n}^{^{(t+1)}}\left( x\right) $ and $F_{n}^{(t)}\left( x\right) $ are
identical.

\begin{corollary}
Let $p\in \left\{ 0,1,2\right\} $ and $t=6n-8+2p$. The zeros of the
polynomial $T_{3n+p}^{(t)}\left( x\right) $ are%
\begin{equation}
x_{k}=\left( \frac{\psi _{2}\pm \sqrt{\psi _{2}^{2}-4\upsilon _{2}}}{2}%
\right) ^{\frac{1}{3}}e^{\frac{2k\pi i}{3}}(k=0,1,2),  \label{eqno30}
\end{equation}%
where $\upsilon _{2}$ and $\psi _{2}$ are defined by the equations $($\ref%
{eqn104}$)$ and $($\ref{eqn105}$)$, respectively.
\end{corollary}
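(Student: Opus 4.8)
The plan is to obtain this corollary as the direct specialization of the preceding theorem (the one whose root formula is \eqref{eqno1}) to the case $r=3$, using the fact that the Tribonacci polynomials $T_n(x)$ are exactly the $R$-Bonacci polynomials $R_n(x)$ with $r=3$. In particular, the index range $p\in\{0,1,2\}$ is simply the range $p\in\{0,1,\dots,r-1\}$ of the theorem evaluated at $r=3$, so no additional cases arise and no new symmetric-function computation is required.

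First I would check that the hypothesis on $t$ is consistent. Substituting $r=3$ into the condition $t=r(r-1)n-2r-(1-p)(r-1)$ of the theorem gives
\begin{equation*}
t=3\cdot 2\cdot n-2\cdot 3-(1-p)(3-1)=6n-6-2(1-p)=6n-8+2p,
\end{equation*}
which is precisely the hypothesis of the corollary. Hence the two statements describe the same family of derivative polynomials.

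Next I would specialize the root formula \eqref{eqno1}. Setting $r=3$ turns the exponent $\frac{1}{r}$ into $\frac{1}{3}$ and the rotation factor $e^{2k\pi i/r}$ into $e^{2k\pi i/3}$ with $k=0,1,2$, which is exactly \eqref{eqno30}. The coefficients $\upsilon_2$ and $\psi_2$ appearing there are the $r=3$ evaluations of the general quantities defined in \eqref{eqn104} and \eqref{eqn105}; by the proof of the theorem these are obtained from the product relation \eqref{eq26} and the sum relation \eqref{eq28} with $\eta=2$, and the solution of the resulting quadratic $x_2^{2r}-\psi_2 x_2^{r}+\upsilon_2=0$ supplies the two admissible values of $x_k^r$, namely $\frac{\psi_2\pm\sqrt{\psi_2^2-4\upsilon_2}}{2}$.

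Because every step is merely a substitution of $r=3$ into already-proved identities, there is no genuine obstacle here; the only point requiring care is the elementary arithmetic simplification $-2r-(1-p)(r-1)=-8+2p$ at $r=3$, which I have verified above. I therefore expect the argument to be short, and it yields that the zeros of $T_{3n+p}^{(t)}(x)$ are exactly those listed in \eqref{eqno30}.
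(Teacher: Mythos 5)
Your proposal is correct and matches the paper's (implicit) argument exactly: the paper states this corollary as the direct specialization of the preceding theorem to $r=3$, and your arithmetic check $t=6n-6-2(1-p)=6n-8+2p$ confirms the hypotheses align. Nothing further is needed.
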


Now we give some examples.
\begin{figure}[t]
\centering
\includegraphics[width=6.5cm]{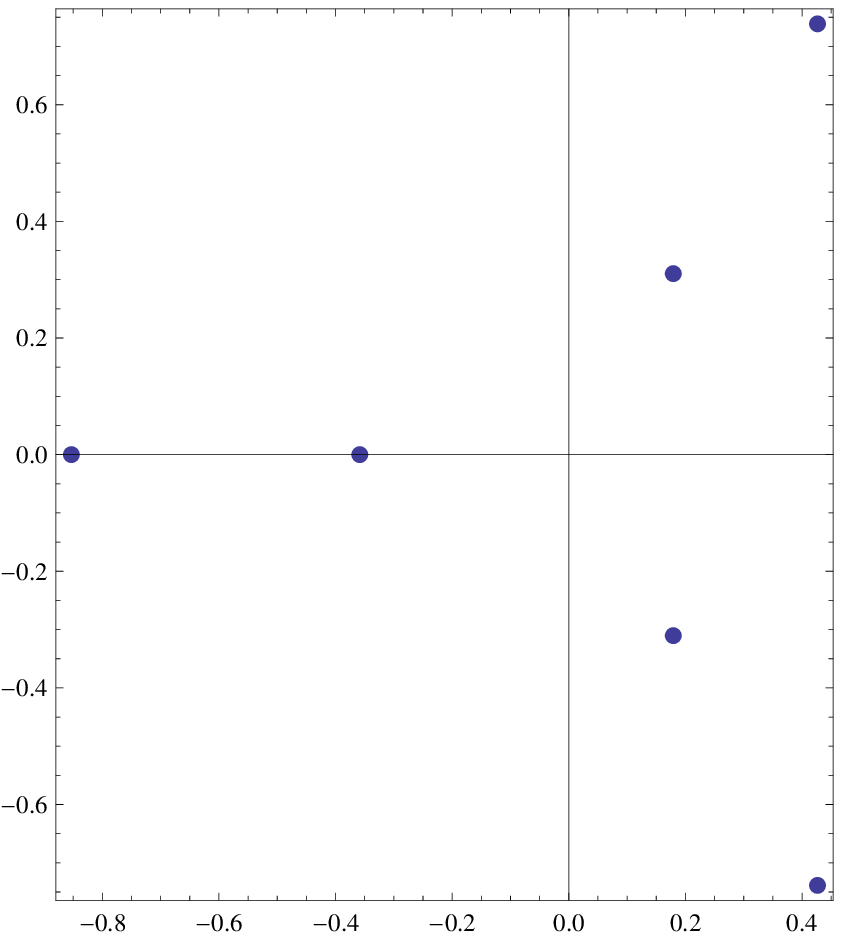}
\caption{The roots of $T_{6}^{(\imath v)}\left( x\right) $.}
\label{fig:2}
\end{figure}

\begin{example}
We consider the zeros of the polynomial
\begin{equation*}
T_{6}^{(\imath v)}\left( x\right) =5040x^{6}+3360x^{3}+144\text{.}
\end{equation*}%
In the equation $($\ref{eqno30}$),\ $writing $\psi _{2}=2/3$, $\upsilon
_{2}=1/35$, we find the zeros of this polynomial as%
\begin{equation*}
x_{k}=\sqrt[3]{\frac{2/3\pm \sqrt{\left( 2/3\right) ^{2}-4/35}}{2}}e^{\frac{%
2k\pi i}{3}},(k=0,1,2)
\end{equation*}%
$($see Figure \ref{fig:2}$)$.
\end{example}

\begin{figure}[t]
\centering
\includegraphics[width=6.5cm]{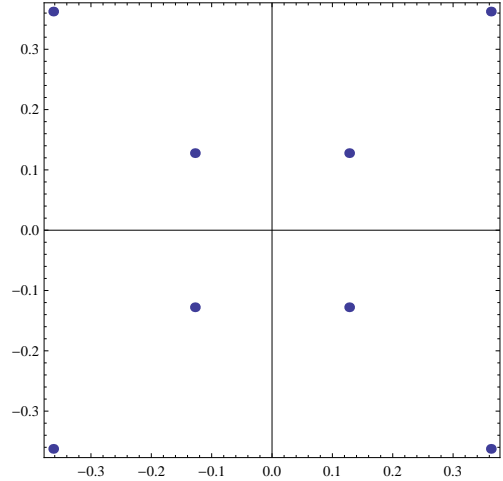}
\caption{The roots of $Q_{8}^{(13)}(x)$}
\label{fig:3}
\end{figure}

\begin{example}
For $p=0$,$\ n=2$ and $r=4$, let us consider the polynomial
\begin{equation*}
Q_{8}^{(13)}\left( x\right)
=93405312000+88921857024000x^{4}+1267136462592000x^{8}.
\end{equation*}

Using the equations $($\ref{eq26}$)$ and $($\ref{eq28}$)$ we have
\begin{equation*}
\underset{k=1}{\overset{2}{\prod }}x_{k}^{4}=\frac{1}{13566}=\upsilon _{2}
\end{equation*}%
and
\begin{equation*}
\underset{k=1}{\overset{2}{\sum }}x_{k}^{4}=-\frac{4}{57}=\psi _{2}.
\end{equation*}%
\ Then the roots of $Q_{8}^{(13)}\left( x\right) $ are generated by $x_{k}$ $%
(k=0,1,2,3)$. By $($\ref{eqno1}$)$, the roots of the polynomial $%
Q_{8}^{(13)}\left( x\right) $ are obtained as%
\begin{equation*}
x_{1}=\sqrt[4]{\frac{-\frac{4}{57}+\sqrt{(-\frac{4}{57})^{2}-\frac{4}{13566}}%
}{2}}=0.127788+0.127788i,
\end{equation*}%
and
\begin{equation*}
x_{2}=\sqrt[4]{\frac{-\frac{4}{57}-\sqrt{(-\frac{4}{57})^{2}-\frac{4}{13566}}%
}{2}}=0.36255+0.36255i
\end{equation*}%
for $k=0$,%
\begin{equation*}
x_{3}=\sqrt[4]{\frac{-\frac{4}{57}+\sqrt{(-\frac{4}{57})^{2}-\frac{4}{13566}}%
}{2}}e^{\frac{\pi i}{2}}=-0.36255+0.36255i
\end{equation*}%
and%
\begin{equation*}
x_{4}=\sqrt[4]{\frac{-\frac{4}{57}-\sqrt{(-\frac{4}{57})^{2}-\frac{4}{13566}}%
}{2}e^{\frac{\pi i}{2}}}=-0.127788+0.127788i
\end{equation*}%
for $k=1$,%
\begin{equation*}
x_{5}=\sqrt[4]{\frac{-\frac{4}{57}+\sqrt{(-\frac{4}{57})^{2}-\frac{4}{13566}}%
}{2}\text{ }}e^{\pi i}=-0.127788-0.127788i
\end{equation*}%
and
\begin{equation*}
x_{6}=\sqrt[4]{\frac{-\frac{4}{57}-\sqrt{(-\frac{4}{57})^{2}-\frac{4}{13566}}%
}{2}e^{\pi i}}=-0.36255-0.36255i,
\end{equation*}%
for$\ k=2,$\
\begin{equation*}
x_{7}=\sqrt[4]{\frac{-\frac{4}{57}+\sqrt{(-\frac{4}{57})^{2}-\frac{4}{13566}}%
}{2}\text{ }}e^{\frac{3\pi i}{2}}=0.127788-0.127788i
\end{equation*}%
and%
\begin{equation*}
x_{8}=\sqrt[4]{\frac{-\frac{4}{57}-\sqrt{(-\frac{4}{57})^{2}-\frac{4}{13566}}%
}{2}e^{\frac{3\pi i}{2}}}=0.36255-0.36255i,
\end{equation*}%
$\ $for$\ k=3$ $($see Figure \ref{fig:3}$)$.
\end{example}

\begin{figure}[t]
\centering
\includegraphics[width=6.5cm]{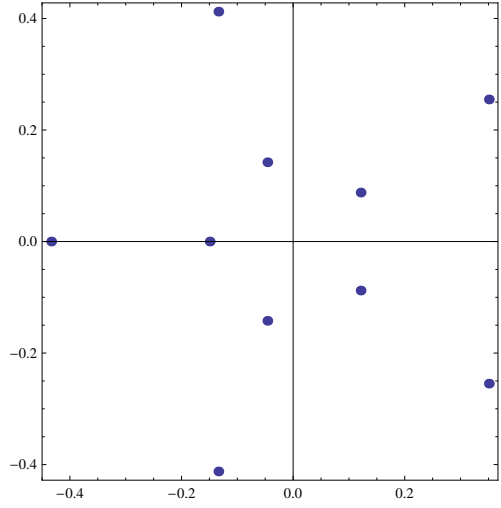}
\caption{The roots of $B_{8}^{(18)}(x)$}
\label{fig:4}
\end{figure}

\begin{example}
Let us consider the $5$-Bonacci polynomials $B_{8}^{18}(x)$. In this case we
have $p=3$,$\ n=1$ and $r=5$ and we obtain that
\begin{equation*}
B_{8}^{18}(x)=96035605585920000+1292600836944248832000x^{5}+84019054401376174080000x^{10}%
\text{.}
\end{equation*}%
The roots of this polynomial are found as follows $($see Figure \ref{fig:4}$%
):$%
\begin{equation*}
x_{k}=\sqrt[5]{\frac{\psi _{2}\pm \sqrt{\psi _{2}^{2}-4\upsilon _{2}}}{2}}e^{%
\frac{2k\pi i}{5}},k=0,1,2,3,4\text{.}
\end{equation*}
\end{example}


\end{document}